\newcommand{\be}{\begin{equation}}
\newcommand{\ee}{\end{equation}}
\title[Consecutive primes and IP sets]
      {Consecutive primes and IP sets}
\author[W.\ D.\ Banks]{William D.\ Banks}
\address{Department of Mathematics, 
         University of Missouri, 
         Columbia MO, USA.}
\email{bankswd@missouri.edu}
\date{\today}
\begin{document}

\thanks{
MSC Primary: 11N05; Secondary: 11N36, 11B75.}

\thanks{
\textbf{Keywords:} Consecutive primes, Maynard sieve, IP sets.}

\maketitle

\begin{abstract}
For an infinite set $\cM\subset\N$, let ${\tt FS}(\cM)$
be the set of all nonzero finite sums of distinct numbers in $\cM$.
An \emph{IP set} is any set of the form ${\tt FS}(\cM)$.
Let $p_n$ denote the $n$-th prime number for each $n\ge 1$.
A \emph{de Polignac number} is any number $m$ such that $p_{n+1}-p_n=m$
for infinitely many $n$.
In this note, we show that every IP set  $\cA\subset 2\N$ contains infinitely many
de~Polignac numbers.
\end{abstract}


\def\R{\mathbb R}
\def\Z{\mathbb Z}
\def\N{\mathbb N}
\newcommand{\nearint}[1]{\left\llbracket #1 \right\rrbracket}

\section{Introduction}

A classical result of Hindman~\cite{Hindman}
(answering a question posed by Graham and 
Rothschild~\cite{GrahamRoths}) relates to \emph{finite sums sets},
which are those of the form
\[
{\tt FS}(\cM)\defeq\Big\{\sum_{m\in\cM'}m:
\varnothing\ne\cM'\subset\cM,~|\cM'|<\infty\Big\}
\]
for some infinite set $\cM$ of natural numbers. Hindman showed that
for any finite coloring of $\N$, there is an
infinite set $\cM$ such that ${\tt FS}(\cM)$ is monochromatic.
In their treatise on topological dynamics and combinatorial number theory,
Furstenberg and Weiss~\cite{FurstWeiss} coined the term \emph{IP set} to
mean any finite sums set ${\tt FS}(\cM)$.
There are numerous interesting examples of IP sets. 
Bergelson and Rusza~\cite{BergRusza} gave a general method for constructing
a variety of IP sets; they showed that if $\cS$ is the set
of squarefree natural numbers and $s\in\N$, then
$\cS-s$ contains an IP~set if and only if $s\in\cS$.

 Let $p_1\defeq 2$, $p_2\defeq 3$, $p_3\defeq 5$, etc.,
be the sequence of all primes. Our aim in this note is to establish
a connection between IP sets and the sequence $(p_{n+1}-p_n)_{n\in\N}$
of consecutive prime gaps.

\begin{theorem}\label{thm:simple}
For every IP set $\cA\subset 2\N$, there exist infinitely many $a\in\cA$
with the property that $p_{n+1}-p_n=a$ for infinitely many $n$.
\end{theorem}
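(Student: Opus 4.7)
The plan is to reduce the theorem to the following special case and then iterate: every IP set $\cA \subset 2\N$ contains at least one de Polignac number. Writing $\cA = \text{FS}(\cM)$ with $\cM = \{m_1 < m_2 < \cdots\}$, I would first replace $\cM$ by a sufficiently rapidly growing subsequence so that $m_{i+1} > \sum_{j \le i} m_j$ for all $i$ (the resulting smaller IP set is still a subset of $\cA$). Under this sparseness hypothesis, successive applications of the ``single de Polignac number'' result to the tails $\{m_N, m_{N+1}, \ldots\}$ yield a strictly increasing sequence $a_1 < a_2 < \cdots$ of de Polignac numbers in $\cA$.

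To extract a single de Polignac number from $\cA$, I would build an admissible $(k+1)$-tuple $H$ whose positive pairwise differences all lie in $\cA$, where $k$ is chosen large enough for the Maynard machinery. Set $W \defeq \prod_{p \le k+1} p$. By the Davenport constant of $\Z/W\Z$, any block of $W$ consecutive elements of $\cM$ admits a non-empty subsum divisible by $W$; repeating this on disjoint blocks of $\cM$ yields pairwise $\cM$-disjoint subset sums $s_1, s_2, \ldots, s_k \in \text{FS}(\cM)$ with $W \mid s_j$ for every $j$. Put
\[
H \defeq \{0,\, s_1,\, s_1+s_2,\, \ldots,\, s_1+\cdots+s_k\}.
\]
Every element of $H$ is divisible by $W$, so $H$ misses all non-zero residues modulo each prime $p \le k+1$; for primes $p > k+1$ admissibility is automatic from $|H| = k+1 < p$. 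For $0 \le i < j \le k$, the difference $(s_1+\cdots+s_j)-(s_1+\cdots+s_i) = s_{i+1}+\cdots+s_j$ is itself a disjoint sum of $\cM$-elements and thus lies in $\cA$.

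The analytic core is a consecutive-primes form of Maynard's theorem: for $k \ge k_0$, any admissible $(k+1)$-tuple $H$ contains distinct $h, h' \in H$ such that $n+h$ and $n+h'$ are \emph{consecutive} primes for infinitely many $n$. Maynard's sieve supplies a positive proportion of $n$ in a suitable arithmetic progression modulo $W$ with at least two primes among $\{n+h : h \in H\}$, and the short-interval estimate $\pi(x+\max H) - \pi(x) \ll \max H / \log x$ forces the average count of extra primes in the window to zero; pigeonholing on the realized prime pair then produces distinct $h, h'$ whose difference is a de Polignac number. Applied to the tuple $H$ constructed above, this difference lies in $\cA$, completing the ``single de Polignac number'' step.

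The main obstacle is the consecutive-primes refinement of Maynard's theorem: the standard statement produces two primes in the tuple but does not immediately guarantee the absence of other primes between them, so one must supplement the sieve bound with a short-interval prime estimate (as above, or as in the work of Pintz and of Banks--Freiberg--Maynard--Turnage-Butterbaugh). The rest of the argument --- the sparseness reduction, the Davenport-constant construction of $H$, and the iteration --- is structural and essentially elementary.
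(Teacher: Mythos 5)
Your proposal is correct, but it reaches Theorem~\ref{thm:simple} by a genuinely different and somewhat more economical route than the paper. The paper deduces Theorem~\ref{thm:simple} from the stronger Theorem~\ref{thm:hard}: it extracts an infinite admissible tuple from the partial sums of $\cM$ by an iterated pigeonhole on residue classes (Lemma~\ref{lem:admissible}), and then runs a Ramsey argument on the coloring ``difference in ${\bf Pol}$ / not in ${\bf Pol}$'' to find a long blue clique, the red option being killed by Theorem~\ref{thm:BFTB}. You instead prove only the statement at hand: you build a finite admissible tuple whose pairwise differences lie in $\cA={\tt FS}(\cM)$ by forcing every element of the tuple to be divisible by $W=\prod_{p\le k+1}p$ (via zero-sum subsums on disjoint blocks of $\cM$, i.e.\ the Davenport constant of $\Z/W\Z$), apply the consecutive-primes theorem once to land a single element of $\cA\cap{\bf Pol}$, and iterate on tails of $\cM$ to get infinitely many. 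This bypasses Ramsey's theorem entirely, which is legitimate here because the coloring argument is only needed to make \emph{all} consecutive block sums de~Polignac numbers simultaneously (the content of Theorem~\ref{thm:hard}); for Theorem~\ref{thm:simple} one pair per application suffices, and your tail iteration (every element of ${\tt FS}$ of a tail exceeds the first element of that tail, so successive applications produce strictly larger de~Polignac numbers) correctly delivers infinitude --- the preliminary sparsification of $\cM$ is not even needed. Your $W$-divisibility construction of the admissible tuple is a valid alternative to Lemma~\ref{lem:admissible}. The one caveat is your sketched justification of the consecutive-primes refinement: the unweighted short-interval count of $n$ having an unwanted prime strictly between $n+h$ and $n+h'$ is of order $N\max H/\log N$, which swamps the sieve's lower bound of order $N/(\log N)^{O(k)}$, so the comparison must be carried out inside the weighted sieve sum (as Banks--Freiberg--Turnage-Butterbaugh and Pintz do); since you explicitly defer to that literature, and the paper likewise invokes Theorem~\ref{thm:BFTB} as a black box, this does not affect the correctness of your argument.
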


\begin{corollary}\label{cor:one}
For every $s\in\cS$, there are infinitely many
$m\in\cS-s$ such that $p_{n+1}-p_n=m$ for infinitely many $n$.
\end{corollary}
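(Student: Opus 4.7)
The plan is to derive this from Theorem~\ref{thm:simple} together with the Bergelson--Ruzsa result recalled in the introduction. Since $s\in\cS$, that result gives an infinite set $\cM\subset\N$ with ${\tt FS}(\cM)\subset\cS-s$. However, Theorem~\ref{thm:simple} requires its input IP set to lie inside $2\N$, and a priori ${\tt FS}(\cM)$ need not. The task therefore reduces to extracting from ${\tt FS}(\cM)$ an IP subset lying in $2\N$.

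This is the only step requiring a small construction, and it is handled by a parity case split. If $\cM$ contains infinitely many even numbers, let $\cM'$ be the set of those even elements, so that ${\tt FS}(\cM')\subset 2\N$. Otherwise $\cM$ contains infinitely many odd numbers; list them as $m_1<m_2<\cdots$ and set $\cM'\defeq\{m_{2i-1}+m_{2i}:i\ge 1\}$. Each element of $\cM'$ is even, and any finite sum of distinct elements of $\cM'$ expands uniquely to a finite sum of distinct elements of $\cM$, so ${\tt FS}(\cM')\subset{\tt FS}(\cM)$. In either case we produce an infinite $\cM'$ with ${\tt FS}(\cM')\subset 2\N\cap(\cS-s)$.

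Applying Theorem~\ref{thm:simple} to the IP set ${\tt FS}(\cM')\subset 2\N$ now yields infinitely many $a\in{\tt FS}(\cM')$ for which $p_{n+1}-p_n=a$ for infinitely many $n$. Since ${\tt FS}(\cM')\subset\cS-s$, setting $m\defeq a$ produces the required family of elements of $\cS-s$. The Bergelson--Ruzsa result and Theorem~\ref{thm:simple} supply all the substance; the only obstacle, which is entirely elementary, is the parity extraction in the second step.
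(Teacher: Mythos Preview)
Your argument is correct and follows exactly the route the paper intends: invoke Bergelson--Ruzsa to place an IP set inside $\cS-s$, then apply Theorem~\ref{thm:simple}. The paper simply asserts that the corollary ``follows at once by choosing appropriate IP sets''; your parity case-split (passing either to the even elements of $\cM$ or to disjoint pairwise sums of odd elements) is a legitimate and necessary detail to force the resulting IP set into $2\N$, and it is carried out cleanly.
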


\begin{corollary}\label{cor:two}
For every $c>0$, there are infinitely many numbers $m$ in the set
\[
\{m\in\N:p\mid m\Longrightarrow p>c\}
\]
with the property that
$\tfrac12(p_{n+1}-p_n)=m-1$ for infinitely many $n$.
\end{corollary}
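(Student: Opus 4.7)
The plan is to deduce Corollary~\ref{cor:two} directly from Theorem~\ref{thm:simple} by constructing an IP set $\cA \subset 2\N$ all of whose elements $a$ have the additional property that $a/2 + 1$ is coprime to every prime $p \le c$. First I would fix a convenient modulus: set
\[
D \defeq 4 \prod_{\substack{p \le c \\ p \text{ odd prime}}} p,
\]
so that $D$ is a positive even integer with $4 \mid D$ and $p \mid D$ for every odd prime $p \le c$. Then I would take the infinite set $\cM \defeq \{2^i D : i \ge 0\}$ and form the IP set $\cA \defeq {\tt FS}(\cM)$.

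Next I would verify the two structural properties required of $\cA$. Every $a \in \cA$ is a positive multiple of $D$, hence $\cA \subset D\N \subset 2\N$. Writing $a = Dk$ with $k \in \N$, the integer $a/2 = (D/2)k$ is divisible by $2$ and by every odd prime $p \le c$, so it is divisible by every prime $p \le c$; consequently
\[
m \defeq \tfrac{a}{2} + 1 \equiv 1 \pmod{p} \qquad \text{for every prime } p \le c,
\]
which means that every prime divisor of $m$ exceeds $c$.

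To finish, I would apply Theorem~\ref{thm:simple} to the IP set $\cA \subset 2\N$: it yields infinitely many $a \in \cA$ for which $p_{n+1} - p_n = a$ for infinitely many $n$. For each such $a$, the number $m = a/2 + 1$ belongs to the stated set and satisfies $\tfrac12(p_{n+1} - p_n) = m - 1$ for infinitely many $n$, and the map $a \mapsto a/2 + 1$ is injective, so infinitely many distinct admissible $m$ are obtained. The only real difficulty along this route is arranging that $\cA \subset 2\N$ while simultaneously forcing $a/2 + 1$ to avoid every prime $\le c$; both conditions are secured in one stroke by insisting that every generator of $\cM$ be divisible by the fixed modulus $D$.
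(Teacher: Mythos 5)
Your proposal is correct and follows exactly the route the paper intends: the author states that the corollaries ``follow at once by choosing appropriate IP sets,'' and your set ${\tt FS}(\{2^iD : i\ge 0\})$ with $D := 4\prod_{p\le c}p$ is precisely such an appropriate choice, after which Theorem~\ref{thm:simple} delivers the conclusion via $m = a/2+1$. No gaps; the verification that $a/2$ is divisible by every prime $p\le c$ (including $2$, thanks to the factor $4$ in $D$) is the only point that needed care, and you handled it.
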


\begin{corollary}\label{cor:three}
There are infinitely many $m\in\N$ that have
only the digits~$0$ and~$2$ in their decimal expansion
and satisfy $p_{n+1}-p_n=m$ for infinitely many~$n$.
\end{corollary}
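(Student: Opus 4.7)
The plan is to realize the set of positive integers whose decimal expansion uses only the digits $0$ and $2$ as an explicit IP~set contained in $2\N$, so that Theorem~\ref{thm:simple} applies directly.

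First I would set
\[
\cM \defeq \{2\cdot 10^i : i\ge 0\} = \{2,\,20,\,200,\,2000,\ldots\},
\]
an infinite subset of $\N$. Then I would observe that
${\tt FS}(\cM)$ is precisely the set of $m\in\N$ all of whose decimal digits are $0$ or $2$: every finite nonempty subset $I\subset\{0,1,2,\ldots\}$ produces the number $\sum_{i\in I}2\cdot 10^i$, which has a $2$ in decimal position $i$ for each $i\in I$ and a $0$ in every other position (no carries occur since the summands involve distinct powers of $10$ with coefficient $2$), and conversely every integer with only the digits $0$ and $2$ arises in this way from a unique finite~$I$.

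Next I would note that each element of $\cM$ is even, so every finite sum of distinct elements of $\cM$ is even; hence $\cA\defeq{\tt FS}(\cM)\subset 2\N$. Since $\cA$ is by construction an IP~set contained in $2\N$, Theorem~\ref{thm:simple} yields infinitely many $m\in\cA$ such that $p_{n+1}-p_n=m$ for infinitely many~$n$, which is exactly the conclusion of Corollary~\ref{cor:three}.

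There is essentially no obstacle here: the only thing to verify is that the target set really is an IP~set, and the identification with ${\tt FS}(\cM)$ for the chosen $\cM$ is a one-line observation about base-$10$ representations.
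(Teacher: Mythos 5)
Your proposal is correct and matches the paper's intent exactly: the paper proves the corollaries "by choosing appropriate IP sets," and the intended choice here is precisely $\cA={\tt FS}(\{2\cdot 10^i: i\ge 0\})\subset 2\N$, to which Theorem~\ref{thm:simple} is then applied. Your verification that this finite-sums set coincides with the set of integers having only the digits $0$ and $2$ (no carries occur) is the one-line observation the paper leaves implicit.
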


Theorem~\ref{thm:simple} is a simple consequence of our main result
(Theorem~\ref{thm:hard}) whose formulation uses 
terminology given in the next section; the corollaries follow at once
by choosing appropriate IP sets. The proof of Theorem~\ref{thm:hard}
is straightforward and short, but it relies on several deep results
from number theory and additive combinatorics. Thus, the ``heavy lifting''
has largely been accomplished thanks to the efforts of various mathematicians;
their contributions are outlined in the next section.

\section{Background Material}
\label{sec:background}

We briefly review background material associated with our main theorem and
some results needed in the proof.

\subsection{de~Polignac numbers}\label{sec:deP}
The notorious \emph{twin prime conjecture} asserts the existence
of infinitely many natural numbers $n$ for which $p_{n+1}-p_n=2$.
This conjecture first appeared in an 1849 monograph of
Alphonse de~Polignac \cite{deP}, who also speculated that
for every even natural number $2m$, there are infinitely many prime
gaps of size $2m$, i.e., that $p_{n+1}-p_n=2m$ holds for infinitely many $n$.
Let {\bf Pol} denote the set of even integers $2m$ with this property.
The elements of {\bf Pol} are called \emph{de~Polignac numbers},
and de~Polignac's hypothesis that ${\bf Pol}=2\N$ is known
as \emph{Polignac's conjecture}.\footnote{Another equally famous
conjecture of Alphonse de Polignac is that every odd integer $k\ge 3$
can be expressed as the sum of a power of two and a prime number. Thus,
in the literature, the term \emph{de Polignac number} often refers to
odd integers $k$ of this form.}

The \emph{bounded gap conjecture} (that is, the assertion
that ${\bf Pol}\ne\varnothing$) was first proved in the
celebrated 2013 paper of Zhang \cite{Zhang}, who employed
a modification of the Goldston-Pintz-Y{\i}ld{\i}r{\i}m sieve \cite{GPY}
to prove the existence of a de~Polignac number not exceeding
$7\times 10^7$. The {\tt Polymath8} project sought to understand
and improve Zhang's arguments 
and ultimately showed that ${\bf Pol}\cap[2,4680]\ne\varnothing$.
Later in 2013, Maynard~\cite{Maynard}
showed that for every $m\in\N$ one has\footnote{The same
result, albeit with a slightly weaker bound, had been independently proven
by Tao around the same time; the latter work remains unpublished.}
\[
\liminf_n(p_{n+m}-p_m)\ll m^3\er^{4m}.
\]
His breakthrough paper also proved the existence
of infinitely many prime gaps of size at most $600$; in other words,
it showed that ${\bf Pol}\cap[2,600]\ne\varnothing$. 
Following Maynard's announcement, a new polymath project was launched
({\tt Polymath8b}) and eventually achieved the result that
${\bf Pol}\cap[2,246]\ne\varnothing$, currently the strongest known
result in this direction.

\subsection{The Maynard-Tao theorem}

An ordered tuple $\cH$ of distinct nonnegative integers is said to be
\emph{admissible} if it avoids at least one residue class mod $p$ for 
every prime $p$. Following Tao and Ziegler~\cite{TZ}, we say that
a finite admissible tuple $\cH=(h_1,\ldots,h_k)$ is \emph{prime-producing}
if there are infinitely many $n\in\N$ such that
$\{n+h_1,\ldots,n+h_k\}$ are simultaneously prime. The
\emph{Dickson-Hardy-Littlewood conjecture} asserts that every
such tuple $\cH$ is prime-producing. This conjecture remains one of
the great unsolved problems in number theory, and the strongest
unconditional result in this direction is the following theorem of
Maynard~\cite{Maynard} and~Tao.

\begin{theorem}[Maynard--Tao]\label{thm:MT}
For every integer $m \ge 2$, there is a number $k_m$ for which
the following holds. If $(h_1,\ldots,h_k)$ is admissible with $k\ge k_m$,
then the set $\{n+h_1,\ldots,n+h_k\}$ contains at least $m$ primes
for infinitely many $n\in\N$.
\end{theorem}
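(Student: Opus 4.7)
The plan is to follow Maynard's approach (together with Tao's independent contemporaneous work): construct a higher-dimensional variant of the Goldston-Pintz-Y{\i}ld{\i}r{\i}m Selberg sieve that detects many primes simultaneously inside the admissible tuple $\cH=(h_1,\ldots,h_k)$. Fix $m\ge 2$ and such a $\cH$, let $N$ be a large parameter, and carry out the standard ``$W$-trick'': set $W=\prod_{p\le D_0}p$ with $D_0$ growing slowly with $N$, and choose a residue class $b\pmod{W}$ with $\gcd(b+h_i,W)=1$ for every $i$, which exists by admissibility. This removes nuisance contributions from small primes and localises the sieve to a single residue class modulo~$W$.

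Next I would introduce sieve weights
\[
w_n=\Biggl(\,\sum_{\substack{d_1,\ldots,d_k\\ d_i\mid n+h_i~\forall i}}\lambda_{d_1,\ldots,d_k}\Biggr)^{2},
\]
where $\lambda_{\mathbf{d}}$ is supported on squarefree tuples with $\prod_i d_i\le R:=N^{\theta/2-\varepsilon}$ at the Bombieri-Vinogradov level $\theta=1/2$. After the usual diagonalisation these coefficients are parametrised by a smooth function $F\colon[0,\infty)^k\to\R$ supported on the simplex $\Delta_k=\{\mathbf{t}\ge\mathbf{0}:t_1+\cdots+t_k\le 1\}$. I would then study
\[
S_1=\sum_{\substack{N\le n<2N\\ n\equiv b\,(W)}}w_n\quad\text{and}\quad S_2=\sum_{\substack{N\le n<2N\\ n\equiv b\,(W)}}\Biggl(\sum_{i=1}^k\mathbf{1}_{\text{prime}}(n+h_i)\Biggr)w_n,
\]
observing that if $S_2>(m-1)S_1$ for infinitely many $N$ then $\{n+h_1,\ldots,n+h_k\}$ must contain at least $m$ primes for infinitely many $n$.

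The substantive work is the asymptotic evaluation of $S_1$ and $S_2$. Standard Selberg manipulations, combined with the Bombieri-Vinogradov theorem for $S_2$, yield expressions of the shape $S_1\sim C_1(F)\,N(\log R)^k/W$ and $S_2\sim C_2(F)\,N(\log R)^{k+1}/(W\log N)$, where $C_1$ is proportional to $\int_{\Delta_k}F^2$ and $C_2$ to $\sum_{i=1}^k\int\bigl(\int F\,dt_i\bigr)^2$. Setting $M_k$ equal to the supremum of $2C_2/C_1$ over admissible $F$, the theorem reduces to showing $M_k\to\infty$ as $k\to\infty$: given $m$, choose $k_m$ so that $M_{k_m}>4(m-1)$ and pick a near-optimal $F$.

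The main obstacle is the explicit construction of such an $F$ for each large $k$. I would follow Maynard's ansatz of seeking symmetric $F$ built from sums of tensor products $\prod_i g(t_i)$ of one-variable functions restricted to $\Delta_k$, which reduces the variational problem to a one-dimensional eigenvalue-type inequality and ultimately yields $M_k\gg\log k$---comfortably more than $4(m-1)$ once $k$ is sufficiently large. The most delicate technical step is justifying the asymptotic for $S_2$ uniformly over the required range of moduli, which is precisely where Bombieri-Vinogradov provides the indispensable analytic input.
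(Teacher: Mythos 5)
The paper does not prove this theorem at all: it is imported as a black box from Maynard's paper \cite{Maynard}, and only its consequence (via Theorem~\ref{thm:BFTB}) enters the argument, so there is no in-paper proof to compare against. Your outline is a faithful summary of Maynard's actual argument---the $W$-trick, the multidimensional Selberg weights $\lambda_{d_1,\ldots,d_k}$, the comparison $S_2>(m-1)S_1$ at Bombieri--Vinogradov level $\theta=1/2$, the reduction to the variational quantity $M_k$, and the lower bound $M_k\gg\log k$ via tensor products of one-variable functions on the simplex---so the strategy is correct as stated; just be aware that the asymptotic evaluation of $S_1$ and $S_2$ and the analysis of the variational problem are the substantial analytic content, and your sketch defers these to the reference rather than carrying them out.
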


Shortly after this theorem was announced, a version of the Maynard-Tao theorem
producing \emph{consecutive} primes in tuples was given by 
Banks, Freiberg and Turnage-Butterbaugh \cite{BFTB}, who used the result to
solve an old problem of Erd\H os. The following is a slightly
weakened version of their theorem.

\begin{theorem}[Banks--Freiberg--Turnage-Butterbaugh]\label{thm:BFTB}
Fix an integer $m\ge 2$, and let $k_m$ have the property stated in
Theorem~\ref{thm:MT}. If $(h_1,\ldots,h_k)$ is admissible with
$k\ge k_m$, then there is a set
$\{h'_1,\ldots,h'_m\}\subset\{h_1,\ldots,h_k\}$ such that the set
$\{n+h'_1,\ldots,n+h'_m\}$ consists of $m$ consecutive primes
for infinitely many $n\in\N$.
\end{theorem}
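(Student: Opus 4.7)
The plan is to deduce Theorem~\ref{thm:BFTB} from Theorem~\ref{thm:MT} by reinforcing the Maynard--Tao sieve with an upper-bound estimate that rules out primes at the ``intruder'' positions of the interval $[h_1,h_k]$. Writing $\mathcal{J}:=\{j\in\Z : h_1\le j\le h_k,\ j\notin\cH\}$, it suffices to produce infinitely many $n$ for which (i)~at least $m$ of the shifts $n+h_i$ are prime, and (ii)~$n+j$ is composite for every $j\in\mathcal{J}$. Whenever both hold, the primes in $[n+h_1,n+h_k]$ are precisely of the form $n+h_i$, so the smallest $m$ such shifts $n+h'_1<\cdots<n+h'_m$ automatically form $m$ consecutive primes.

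The first step is to invoke the Maynard--Tao sieve for $\cH$. Set $W:=\prod_{p\le w}p$ with $w$ tending to infinity slowly with $N$, and select $\nu_0\pmod{W}$ with $\gcd(\nu_0+h_i,W)=1$ for every $i$, available by admissibility. Maynard constructs a nonnegative weight $w_n$ supported on $n\in[N,2N]$ with $n\equiv\nu_0\pmod{W}$; since $k\ge k_m$, his main estimate supplies a fixed $\eta>0$ with
\[
\sum_n w_n\cdot\#\{i:n+h_i\text{ is prime}\}\;\ge\;(m+\eta)\sum_n w_n
\]
for all large~$N$. The second and key new step is to bound the intruder sums $T_j:=\sum_n w_n\mathbf{1}[n+j\text{ is prime}]$ for $j\in\mathcal{J}$. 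If $\gcd(\nu_0+j,W)>1$ then $n+j$ is divisible by a prime $\le w$, so $T_j=0$ for $N$ large. Otherwise, expanding $w_n$ as a squared truncated divisor sum and applying Bombieri--Vinogradov to each resulting arithmetic progression produces a main term smaller than Maynard's $S_2$-term by a factor of $\log R$, giving
\[
\sum_{j\in\mathcal{J}}T_j\;=\;o\!\Big(\sum_n w_n\Big)\qquad(N\to\infty).
\]

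To combine these estimates, set $M:=k+1$, $X(n):=\#\{i:n+h_i\text{ prime}\}$, and $Y(n):=\#\{j\in\mathcal{J}:n+j\text{ prime}\}$. The bounds above give $\sum_n w_n(X(n)-MY(n))\ge(m+\tfrac{\eta}{2})\sum_n w_n$ for $N$ large. But $X(n)\le k$ forces $X(n)-MY(n)\le -1$ whenever $Y(n)\ge 1$, so the positive contribution is entirely carried by $n$ with $Y(n)=0$; averaging against $w_n$ restricted to this set then yields some $n\in[N,2N]$ with $Y(n)=0$ and $X(n)\ge m$. For this $n$, the corresponding subset $\{h'_1,\ldots,h'_m\}\subset\cH$ furnishes $m$ consecutive primes, and letting $N\to\infty$ supplies infinitely many such $n$.

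The main obstacle is the upper bound on $T_j$ in the second step: one must verify that the Maynard-style divisor-sum computation genuinely loses a factor of $\log R$ when the prime indicator is placed at a position outside $\cH$. Heuristically, $w_n$ is engineered to favor near-primality only at $\cH$-positions, so for $j\notin\cH$ the indicator $\mathbf{1}[n+j\text{ prime}]$ decorrelates from $w_n$ and contributes only the Chebyshev density $1/\log N$; rendering this rigorous is a technically careful but fundamentally standard adaptation of Maynard's $S_2$-calculation, with the prime indicator moved from $\cH$ to $\mathcal{J}$.
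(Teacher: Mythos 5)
First, note that the paper does not actually prove Theorem~\ref{thm:BFTB}: it is imported wholesale from the cited reference \cite{BFTB}, so the comparison below is with the proof in that reference rather than with anything in the present text. Your strategy --- run the Maynard--Tao sieve on $\cH$, show the weighted count of primes at the ``intruder'' positions $j\in\mathcal{J}$ is $o\big(\sum_n w_n\big)$, and combine via the telescoping inequality with $M=k+1$ --- is a viable and well-established route (it is essentially how Pintz and Polymath8b derive consecutive-prime variants of Maynard's theorem), and your final bookkeeping is correct up to two small slips: the hypothesis $k\ge k_m$ yields $\sum_n w_nX(n)\ge(m-1+\eta)\sum_n w_n$, not $(m+\eta)\sum_n w_n$ (the argument survives this correction), and since the subset $\{h'_1,\ldots,h'_m\}$ you extract may vary with $N$, you need a closing pigeonhole over the $\binom{k}{m}$ possible subsets to obtain a single subset valid for infinitely many $n$, as the statement requires. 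The one genuine gap is the step you flag yourself: the bound $T_j=o\big(\sum_n w_n\big)$ for $j\notin\cH$ with $\gcd(\nu_0+j,W)=1$ is asserted on heuristic grounds but never proved, and it is the entire technical content of the theorem beyond Theorem~\ref{thm:MT}; ``a standard adaptation of Maynard's $S_2$-calculation'' is a promissory note, not a proof.

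The published proof in \cite{BFTB} avoids that sieve estimate altogether by a cheaper device. Enlarge $w$ until the interval $(h_k-h_1,w]$ contains at least $h_k-h_1$ primes, assign to each intruder $j\in\mathcal{J}$ a distinct prime $p_j$ from that range, and impose $\nu_0\equiv-j\pmod{p_j}$; since $0<|h_i-j|<p_j$, these congruences are compatible with the requirement $\gcd(\nu_0+h_i,W)=1$, which admissibility supplies at the remaining primes $p\le w$. With this choice every intruder value $n+j$ is divisible by some prime $p_j\le w$ and hence composite for large $n$, so Theorem~\ref{thm:MT}, applied verbatim along $n\equiv\nu_0\pmod W$, already yields $m$ primes among the $n+h_i$ that are necessarily consecutive. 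If you want a complete argument, replace your second step by this Chinese Remainder choice of $\nu_0$: it eliminates the only unproved estimate in your outline, at the cost of no new analytic input.
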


%

\subsection{Ramsey's theorem}
We recall the well known result of Ramsey~\cite{Ramsey}.

\begin{theorem}[Ramsey]\label{thm:ramsey}
For any $r,s\in\N$, there is a least natural number $R(r,s)$
such that every blue--red edge coloring of the complete graph
on $R(r,s)$ vertices contains either a blue clique on $r$ vertices or a
red clique on $s$ vertices.
\end{theorem}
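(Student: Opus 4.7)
The plan is to prove both the existence of $R(r,s)$ and the recursive bound
\[
R(r,s) \le R(r-1,s) + R(r,s-1)
\]
by induction on $r+s$. The base cases are immediate: one has $R(1,s)=R(r,1)=1$, since a single vertex trivially forms a monochromatic clique of size one. Slightly more informatively, $R(r,2)=r$ and $R(2,s)=s$, since in any blue--red edge coloring of the complete graph on $n$ vertices, either every edge is blue (yielding a blue $K_n$) or at least one edge is red (yielding a red $K_2$). This anchors the induction at $r+s=3$ with both $R(r-1,s)$ and $R(r,s-1)$ already defined.

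For the inductive step, I would assume that $R(r-1,s)$ and $R(r,s-1)$ both exist and set $N\defeq R(r-1,s)+R(r,s-1)$. Given any blue--red edge coloring of the complete graph on $N$ vertices, fix an arbitrary vertex $v$ and partition the remaining $N-1$ vertices into the set $B_v$ of vertices joined to $v$ by a blue edge and the set $W_v$ of vertices joined to $v$ by a red edge. Since $|B_v|+|W_v|=N-1$, the pigeonhole principle forces $|B_v|\ge R(r-1,s)$ or $|W_v|\ge R(r,s-1)$.

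In the first case, the induction hypothesis applied to the coloring restricted to $B_v$ produces either a red clique of size $s$ (and we are done) or a blue clique of size $r-1$, which combines with $v$ to yield a blue clique of size $r$. The second case is symmetric: a blue $K_r$ inside $W_v$ finishes immediately, while a red $K_{s-1}$ inside $W_v$ combines with $v$ to give a red $K_s$. This shows $R(r,s)\le N$, proving finiteness; the existence of a \emph{least} such integer then follows from the well-ordering of $\N$.

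The only real obstacle is the pigeonhole bookkeeping and ensuring that the induction is anchored so that both $R(r-1,s)$ and $R(r,s-1)$ are available when invoked; organizing the induction on $r+s$ handles this uniformly. No deeper machinery is required --- in marked contrast to Theorems~\ref{thm:MT} and~\ref{thm:BFTB}, Ramsey's theorem is an elementary combinatorial statement, and this two-variable induction is both the shortest and most natural route to it.
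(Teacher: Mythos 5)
Your proof is correct and complete: it is the standard Erd\H os--Szekeres double induction establishing $R(r,s)\le R(r-1,s)+R(r,s-1)$, with the base cases, the pigeonhole split of the neighbors of a fixed vertex $v$ into blue- and red-joined classes, and the two symmetric cases all handled properly. Note, however, that the paper offers no proof of this statement at all --- Theorem~\ref{thm:ramsey} is simply recalled as a classical result with a citation to Ramsey's 1929 paper --- so there is no argument in the text to compare yours against; your write-up supplies the standard elementary proof that the paper takes for granted.
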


\section{Statement of the main theorem}
\label{sec:2nd theorem}

The present paper was inspired by recent work
of Tao and Ziegler~\cite{TZ}, who study infinite restricted sumsets
in the shifted primes and ascending chains of prime-producing tuples.
Although our results revolve around the study of sums of de~Polignac numbers,
we do not use any results from \cite{TZ} but instead rely only on
the theorems discussed in \S\ref{sec:background}
and an auxiliary result (see Lemma~\ref{lem:admissible} below).

\begin{theorem}\label{thm:hard}
Let $\cA\subset 2\N$ be an IP set.
Then, for any $k\ge 1$, there is a sequence $a_1,a_2,\ldots,a_k$ in $\cA$
such that
\be\label{eq:yo}
\sum_{i\le n\le j}a_n\in{\bf Pol}\qquad\text{whenever}\quad
1\le i\le j\le k.
\ee
Moreover, the $a_j$'s can be chosen so that all of
the sums in \eqref{eq:yo} are distinct from one another.
\end{theorem}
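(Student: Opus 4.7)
The plan is to combine Ramsey's theorem with Theorem~\ref{thm:BFTB} applied to an appropriately chosen admissible sequence of partial sums drawn from $\cA$.

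First, I would replace $\cA$ by a sub-IP set of itself: writing $\cA={\tt FS}(\cM)$, pass to an infinite subset $\cM'=\{m_1<m_2<\cdots\}\subseteq\cM$ whose elements grow so rapidly (for instance $m_{j+1}>m_1+m_2+\cdots+m_j$) that distinct finite subsets of $\cM'$ have distinct sums. This Sidon-type property will later supply the distinctness clause in the ``moreover'' part of the statement, since each sum in~\eqref{eq:yo} corresponds to a specific contiguous block of generators. I would then invoke the auxiliary Lemma~\ref{lem:admissible} to thin $\cM'$ further so that the partial sums $\mu_j:=m_1+\cdots+m_j$ form an admissible infinite tuple (otherwise the $\mu_j$'s might share a common residue modulo some small prime, which would block any application of the Maynard--Tao machinery).

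Let $k_2$ be the constant from Theorems~\ref{thm:MT} and~\ref{thm:BFTB} corresponding to $m=2$, and let $N:=R(k+1,k_2)$ be the Ramsey number from Theorem~\ref{thm:ramsey}. Color the edges of the complete graph on $\{1,\ldots,N\}$ by declaring an edge $\{i,j\}$ with $i<j$ to be \emph{blue} if $\mu_j-\mu_i\in{\bf Pol}$ and \emph{red} otherwise. Ramsey yields either a blue clique of size $k+1$ or a red clique of size $k_2$. The red alternative is ruled out by Theorem~\ref{thm:BFTB}: the sub-tuple of $(\mu_j)$ indexed by a red $k_2$-clique is still admissible (admissibility is inherited by sub-tuples), so Theorem~\ref{thm:BFTB} with $m=2$ would supply indices $\alpha<\beta$ in that clique for which $n+\mu_\alpha$ and $n+\mu_\beta$ are consecutive primes for infinitely many $n$, forcing $\mu_\beta-\mu_\alpha\in{\bf Pol}$ and contradicting redness.

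A blue clique $v_0<v_1<\cdots<v_k$ therefore exists. Setting
\[
a_\ell:=\mu_{v_\ell}-\mu_{v_{\ell-1}}=m_{v_{\ell-1}+1}+\cdots+m_{v_\ell}
\qquad(1\le\ell\le k),
\]
each $a_\ell$ lies in $\cA$, and blueness of the edge $\{v_{i-1},v_j\}$ gives $a_i+a_{i+1}+\cdots+a_j=\mu_{v_j}-\mu_{v_{i-1}}\in{\bf Pol}$ for every $1\le i\le j\le k$. The rapid-growth condition from the first step makes $\{\mu_{v_0},\ldots,\mu_{v_k}\}$ a Sidon set, so all $\binom{k+1}{2}$ sums are pairwise distinct, yielding the moreover clause.

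The step I expect to require the most care is the preliminary admissibility arrangement (Lemma~\ref{lem:admissible}): the generators must be sifted so that their partial sums simultaneously miss a residue class modulo every prime while still lying inside $\cA$. Once an admissible $(\mu_j)$ is in hand, the Ramsey--Maynard--Tao combination runs cleanly.
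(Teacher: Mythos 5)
Your proposal is correct and follows essentially the same route as the paper: an admissible sequence of partial sums obtained from Lemma~\ref{lem:admissible}, a blue/red Ramsey coloring of pairs according to membership of the difference in ${\bf Pol}$, and Theorem~\ref{thm:BFTB} with $m=2$ to rule out the red clique. The only (immaterial) difference is that you secure distinctness of the sums by first sparsifying the generators of $\cM$ into a Sidon-type set, whereas the paper instead passes to a lacunary subsequence of the admissible partial sums.
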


Note that Theorem~\ref{thm:hard} implies that
$|\cA\cap{\bf Pol}|=\infty$, and Theorem~\ref{thm:simple} follows.

\section{Proof of the main theorem}
\label{sec:proof}

We need the following lemma.

\begin{lemma}\label{lem:admissible}
Every infinite set $\cB\subset\N$
contains a strictly increasing sequence $(b_n)_{n\in\N}$ 
for which the tuple $(b_1,b_2,\ldots)$ is admissible.
\end{lemma}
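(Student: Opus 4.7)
The plan is to build the sequence $(b_n)$ by a nested pigeonhole argument, choosing one residue class modulo each prime in turn. Enumerate the primes as $p_1<p_2<\cdots$ and set $\cB_0\defeq\cB$. Inductively, given an infinite set $\cB_{n-1}\subseteq\cB$, the pigeonhole principle produces a residue class $r_n$ modulo $p_n$ for which
\[
\cB_n\defeq\{b\in\cB_{n-1}:b\equiv r_n\!\!\!\pmod{p_n}\}
\]
is still infinite. Choose $b_n\in\cB_n$ with $b_n>b_{n-1}$; this is possible because $\cB_n$ is infinite, and it yields a strictly increasing sequence in $\cB$.

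Because the family $\cB_1\supseteq\cB_2\supseteq\cdots$ is nested, for any $N\ge 1$ and any $n\ge N$ one has $b_n\in\cB_N$, hence $b_n\equiv r_N\pmod{p_N}$. Therefore the residues modulo $p_N$ assumed by the entire sequence $(b_1,b_2,\ldots)$ are contained in
\[
\{b_1\bmod p_N,\,\ldots,\,b_{N-1}\bmod p_N,\,r_N\},
\]
a set of cardinality at most $N$. Since $p_N>N$ for every $N\ge 1$, some residue class modulo $p_N$ is missed by the whole sequence, so the infinite tuple $(b_1,b_2,\ldots)$ is admissible.

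The argument amounts to a single diagonal pigeonhole; the only subtlety is the bookkeeping observation $p_N>N$, which lets one restriction modulo $p_N$ control the tail while leaving enough residue classes unused to absorb the finite head $b_1,\ldots,b_{N-1}$. I do not anticipate any serious obstacle beyond writing the induction cleanly and verifying that every prime is handled simultaneously.
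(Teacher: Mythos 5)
Your proof is correct and follows essentially the same nested pigeonhole construction as the paper: refine $\cB$ one prime at a time into infinite subsets lying in a single residue class, pick $b_n$ from the $n$-th refinement, and note that the whole sequence occupies at most $N<p_N$ residue classes modulo $p_N$. The only cosmetic difference is that you retain the residue $r_N$ explicitly where the paper replaces it by $b_N$ itself.
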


\begin{proof}
Put $b_0\defeq 0$ and $\cB_0\defeq\cB$. Using induction, we construct
a sequence $(b_n)_{n\in\N}$ contained in $\cB$ and a family
of sets $\{\cB_n\}_{n\in\N}$ satisfying
\be\label{eq:Scondn}
b_n>b_{n-1},\qquad \cB_n=\cB_{n-1}\cap(p_n\Z+b_n)
\mand
|\cB_n|=\infty.
\ee
Indeed, let $n\ge 1$, and suppose $b_j$ and $\cB_j$ have been determined
for all $j<n$. Since $\cB_{n-1}$ is infinite, the pigeonhole principle
shows that $\cB_n\defeq\cB_{n-1}\cap (p_n\Z+h)$ is infinite
for at least one integer $h\in[0,p_n)$. Replacing $h$ by any number
$b_n\in\cB_n$ larger than $b_{n-1}$, we obtain \eqref{eq:Scondn},
completing the induction.

To finish the proof, we show that $(b_1,b_2,\ldots)$ is admissible.
Indeed, let $p_n$ be an arbitrary prime, and observe that
$\cB_m\subset\cB_n\subset p_n\Z+b_n$
for all integers $m\ge n$; consequently,
\[
\big|\{b_1,b_2,\ldots\}\bmod p_n\big|
=\big|\{b_1,\ldots,b_n\}\bmod p_n\big|\le n<p_n,
\]
and thus $(b_1,b_2,\ldots)$ avoids at least
one residue class mod $p_n$.
\end{proof}

\begin{proof}[Proof of Theorem~\ref{thm:hard}]
As $\cA$ is an IP set,  $\cA={\tt FS}(\cM)$ for some infinite set $\cM\subset\N$.
We order the elements $\{m_1,m_2,\ldots\}$ of $\cM$ with
$m_1<m_2<\cdots$ and denote by $\cB$ the set of all partial sums
of the form $\sum_{k\le \ell}m_k$.
According to Lemma~\ref{lem:admissible}, the set $\cB$
contains a strictly increasing sequence $(b_n)_{n\in\N}$ 
for which the tuple $(b_1,b_2,\ldots)$ is admissible. Note that,
for any pair $(i,j)$ with $1\le i<j$, there are numbers
$\ell'_i<\ell'_j$ such that
\[
b_j-b_i=\sum_{\ell'_i<k\le\ell'_j}m_k\in\cA.
\]

Next, we extract from $(b_n)_{n\in\N}$
a lacunary subsequence $(v_n)_{n\in\N}$
such that $v_{n+1}/v_n>10$ (say) for each $n$; this ensures that the
differences $v_j-v_i$ are all distinct from one another.
The resulting tuple $(v_1,v_2,\ldots)$ is also admissible.
As before, for any pair $(i,j)$ with $1\le i<j$, there are numbers
$\ell_i<\ell_j$ such that
\be\label{eq:v sight}
v_j-v_i=\sum_{\ell_i<k\le\ell_j}m_k\in\cA.
\ee

Now, in the notation of Theorems~\ref{thm:BFTB} and~\ref{thm:ramsey}, we set
\[
r\defeq L+1,\qquad
s\defeq k_2,
\mand
N\defeq R(r,s).
\]
We now view the finite collection $\{v_1,\ldots,v_N\}$ as the vertex set
$\cV$ of a complete graph~$\cG$ with edge set is
$\cE\defeq\{e_{i,j}:1\le i<j\le N\}$;
each edge $e_{i,j}$ connects vertex $v_i$ to vertex $v_j$.
Define a blue--red coloring of the edges via the map
\[
\sC:\cE\to\{{\tt blue},{\tt red}\},\qquad
\sC(e_{i,j})\defeq\begin{cases}
{\tt blue}&\quad\hbox{if $v_j-v_i\in{\tt Pol}$},\\
{\tt red}&\quad\hbox{if $v_j-v_i\not\in{\tt Pol}$}.\\
\end{cases}
\]
By Theorem~\ref{thm:ramsey}, the graph $\cG$ contains either a blue clique on
$r=L+1$ vertices or a red clique on $s=k_2$ vertices.

We claim that the latter possibility is untenable, i.e.,
no such red clique exists.
Indeed, suppose on the contrary that there is a
complete subgraph $\cG^\flat\subset\cG$ determined by vertices
$\cV^\flat\defeq\{h_1,\ldots,h_{k_2}\}$ in $\cV$ such that $\cG^\flat$
is monochromatic and colored red by the map $\sC$.
By Theorem~\ref{thm:BFTB}, there is a set
$\{h'_1,h'_2\}\subset \cV^\flat$
such that $n+h'_1$ and $n+h'_2$ are consecutive primes for infinitely
many $n\in\N$; in other words, $|h'_2-h'_1|\in{\bf Pol}$.
But this is not possible
since $h'_1$ and $h'_2$ are distinct vertices in $\cV^\flat$
and hence the corresponding edge is colored red by $\sC$.

Consequently, within the graph $\cG$ there is blue clique on $r=L+1$ vertices,
i.e., there is a complete subgraph $\cG^\sharp\subset\cG$ determined by vertices
$\cV^\sharp\defeq\{h_1,\ldots,h_{L+1}\}$ in~$\cV$ such that $\cG^\sharp$
is monochromatic and colored blue by $\sC$.
Ordering $\cV^\sharp$ so that $h_1<\cdots<h_{L+1}$, we have 
\[
\{h_j-h_i:1\le i<j\le m+1\}\subset{\bf Pol}
\]
since the edge determined by $h_i$ and $h_j$ is colored blue by $\sC$
for $1\le i<j\le m+1$. By construction, all of the differences $h_j-h_i$
are distinct from one another. Since $h_i$ and $h_j$ are distinct
vertices in $\cV$, we have $h_{n+1}-h_n\in\cA$ for each $n\in[1,L]$
in view of \eqref{eq:v sight}. Defining $a_n\defeq h_{n+1}-h_n$,
we obtain a sequence $a_1,\ldots,a_L$ with the properties
stated in the theorem, since
\[
\sum_{i\le n\le j}a_n
=\sum_{i\le n\le j}(h_{n+1}-h_n)=h_{j+1}-h_i\in{\bf Pol}.
\]
This completes the proof of Theorem~\ref{thm:hard}.
\end{proof}

\section*{Acknowledgement}

The author thanks Terry Tao for various helpful conversations.

\end{document}